\documentclass{article}
\usepackage{amsmath,amssymb,amsthm,graphics}
\usepackage[mathscr]{eucal}
\title{Degrees that are not degrees of categoricity}

\author{Bernard A. Anderson\\ \small{Department of Mathematics and Physical Sciences}\\ \small{Gordon State College}\\ \small{banderson@gordonstate.edu}\\ \small{www.gordonstate.edu/Faculty/banderson} \and
Barbara F.~Csima \thanks{B.\ Csima was partially supported by Canadian NSERC
Discovery Grant 312501.} \\ \small{Department of Pure Mathematics}\\ \small{University of
Waterloo}\\
\small{csima@math.uwaterloo.ca}\\
\small{www.math.uwaterloo.ca/$\sim$csima}}

\newtheorem{theorem}{Theorem}[section]
\newtheorem{lemma}[theorem]{Lemma}
\newtheorem{cor}[theorem]{Corollary}
\newtheorem{prop}[theorem]{Proposition}
\newtheorem{remark}[theorem]{Remark}

\theoremstyle{definition}
\newtheorem{definition}{Definition}

\newcommand{\setsep}{\ensuremath{\, | \;}}

\newcommand{\strings}{2^{<\omega}}
\newcommand{\num}{\in\omega}

\newcommand{\baistr}{\omega^{<\omega}}

\newcommand{\empstr}{\langle \rangle}
\newcommand{\rstrd}{\upharpoonright\!\!\upharpoonright}
\newcommand{\Eres}{\upharpoonright\!\!\upharpoonright}
\newcommand{\es}{\emptyset}
\newcommand{\ce}{c.e.\ }
\newcommand{\la}{\langle}
\newcommand{\ra}{\rangle}
\newcommand{\rt}{\rightarrow}

\newcommand{\dg}[1]{\mathbf{#1}}
\newcommand{\sta}{\mathcal{A}}
\newcommand{\stm}{\mathcal{M}}
\newcommand{\stb}{\mathcal{B}}

\newcommand{\len}{\mbox{length}}
\newcommand{\esdj}{\emptyset^{\prime \prime}}
\newcommand{\zdj}{\mathbf{0}^{\prime \prime}}
\newcommand{\scrA}{\mathcal{A}}
\newcommand{\scrB}{\mathcal{B}}

\newcommand{\scrN}{\mathcal{N}}
\newcommand{\iso}{\cong}
\newcommand{\up}{\uparrow}
\newcommand{\conc}{\widehat{\ }}
\newcommand{\nin}{\not\in}
\newcommand{\res}{\upharpoonright}
\newcommand{\N}{\mathbb{N}}

\bibliographystyle{abbrv}
\begin{document}
\maketitle

\begin{abstract} A computable structure $\sta$ is
$\dg{x}$-{\it computably categorical} for some Turing degree
$\dg{x}$, if for every computable structure $\stb \cong \sta$
there is an isomorphism $f:\stb \to \sta$ with $f \leq_T
\dg{x}$.  A degree $\dg{x}$ is a {\it degree of categoricity}
if there is a computable structure $\sta$ such that $\sta$ is
$\dg{x}$-computably categorical, and for all $\dg{y}$, if
$\sta$ is $\dg{y}$-computably categorical then $\dg{x} \leq_T
\dg{y}$.

We construct a $\Sigma^0_2$ set whose degree is not a degree of
categoricity.  We also demonstrate a large class of degrees
that are not degrees of categoricity by showing that every
degree of a set which is 2-generic relative to some perfect
tree is not a degree of categoricity.  Finally, we prove that
every noncomputable hyperimmune-free degree is not a degree of
categoricity.
\end{abstract}

\section{Introduction}

Classically, isomorphic structures are considered to be
equivalent. In computable structure theory, one has to be more
careful. Different copies of the same structure may have
different complexity, and for some structures, it can happen
that there are two computable copies of the structure between
which there is no computable isomorphism. In fact, for
situations where this does not happen, we have the following
definition.

\begin{definition} A computable structure $\scrA$ is
\emph{computably categorical} if for all computable $\scrB \iso
\scrA$ there exists a computable isomorphism between $\scrA$
and $\scrB$.
\end{definition}

For example, any two computable dense linear orders without
endpoints are computably isomorphic. Thus, any computable dense
linear order without endpoints is computably categorical.

On the other hand, it is well known that the structure $(\N,
<)$, the natural numbers with the usual $<$ order, is not
computably categorical. Indeed, let $\{K_s\}_{s\in \omega}$ be
a computable enumeration of $\es'$ where there is exactly one
element enumerated at each stage, and consider the order
$\scrA$ where the even numbers have their usual order and $2n
<_\scrA 2s+1 <_\scrA 2n +2$ iff $n \in K_{s+1} - K_s$. Note
that any isomorphism $f:\scrA \rt \scrN$ computes $\es'$.
Conversely, between any two computable copies of $(\N, <)$
there is a $\es'$-computable isomorphism. That is, it seems
that ${\bf 0'}$ is the degree of difficulty associated to the
problem of computing isomorphisms between arbitrary copies of
$(\N, <)$. This motivates the following definitions.

\begin{definition} A computable structure $\scrA$ is
${\bf d}$-\emph{computably categorical} if for all computable
$\scrB \iso \scrA$ there exists a {\bf d}-computable
isomorphism between $\scrA$ and $\scrB$.
\end{definition}

So with this definition, $(\N, <)$ is ${\bf 0'}$-computably
categorical.

\begin{definition} We say a structure $\scrA$ has \emph{degree of
categoricity} $\bf{d}$ if $\scrA$ is $\bf{d}$-computably
categorical, and for all $\bf{c}$ such that $\scrA$ is
$\bf{c}$-computably categorical, $\bf{d} \leq \bf{c}$. We say a
degree $\bf{d}$ is a \emph{degree of categoricity} if there is
some structure with degree of categoricity $\bf{d}$.
\end{definition}

So in our examples, we have seen that ${\bf 0}$ and ${\bf 0'}$
are degrees of categoricity.

The notion of a degree of categoricity was first introduced by
Fokina, Kalimul\-lin and Miller in \cite{FKM}. In that paper,
they showed that if ${\bf d}$ is d.c.e.\ in and above
${\bf0^{(n)}}$, then ${\bf d}$ is a degree of categoricity.
They also showed that ${\bf 0^{(\omega)}}$ is a degree of
categoricity. In fact, all the examples they constructed had
the following, stronger property.

\begin{definition}A degree of categoricity ${\bf d}$ is a
\emph{strong} degree of categoricity if there is a structure
$\scrA$ with computable copies $\scrA_0$ and $\scrA_1$ such
that ${\bf d}$ is the degree of categoricity for $\scrA$, and
every isomorphism $f: \scrA_0 \rightarrow \scrA_1$ satisfies
deg$(f) \geq {\bf d}$.
\end{definition}

Fokina, Kalimullin and Miller \cite{FKM} showed that all strong
degrees of categoricity are hyperarithmetical. Later, Csima,
Franklin and Shore \cite{CFS} showed that in fact all degrees
of categoricity are hyperarithmetical. This may or may not be
an improvement, as it is unknown whether all degrees of
categoricity are strong.

Csima, Franklin and Shore \cite{CFS} have shown that for every
computable ordinal $\alpha$, ${\bf 0^{(\alpha)}}$ is a (strong)
degree of categoricity. They also showed that if $\alpha$ is a
computable successor ordinal and ${\bf d}$ is d.c.e.\ in and
above ${\bf 0^{(\alpha)}}$, then ${\bf d}$ is a (strong) degree
of categoricity.

The work on degrees of categoricity so far has gone into
showing that various degrees \emph{are} degrees of
categoricity. In this paper we address the question: What are
examples of degrees that are \emph{not} degrees of
categoricity? Certainly, as there are only countably many
computable structures, there are only countably many degrees of
categoricity. In section 3, we give a basic construction of a
degree below ${\bf 0''}$ that is not a degree of categoricity.
In section 4 we show that degrees of 2-generics (indeed, of
2-generics relative to perfect trees) are not degrees of
categoricity. In section 5 we show that noncomputable
hyperimmune-free degrees cannot be degrees of categoricity.
Finally, in section 6, we show that there exists a $\Sigma^0_2$
degree that is not a degree of categoricity.

\section{Notation}

For general references, see Harizanov \cite{h98} for computable
structure theory, and Soare \cite{Soare} for computability
theory.

We use $T$ to denote a tree (a subset of $\strings$ closed
under initial segments).  All other uppercase letters are used
for subsets of the natural numbers and lowercase bold letters
are used for Turing degrees.  We use $\alpha$, $\beta$,
$\sigma$, and $\tau$ to represent strings (elements of
$\strings$).  When dealing with strings, $\supseteq$
denotes string extension. For $A \subseteq \N$ and $n \in
\omega$, we let $A\res n = \{ x \in A \mid x < n \}$ and let $A
\Eres n = \{ x \in A \mid x \leq n \}$, with analogous
definitions of $\sigma \res n$ and $\sigma \Eres n$ for $\sigma
\in 2^{<\omega}$. We have $\Phi_n$ denote the $n$-th
oracle Turing reduction and $\varphi_n$ the $n$-th Turing
reduction.

We use calligraphic letters ($\sta, \stb, \stm$) to denote
computable structures.  We let $\sta_n$ denote the $n$-th
partial computable structure under some effective listing.  For
simplicity, we assume all computable structures have domain
$\omega$ or an initial segment of $\omega$.  We also assume all
structures are in a finite language.

We let Part$(\scrA,\scrB)$ denote the set of partial
isomorphisms between $\scrA$ and $\scrB$. That is, the set of
functions that, on their domain/range, are injective
homomorphisms from a substructure of $\scrA$ to a substructure
of $\scrB$. Note that if $\scrA_i$ and $\scrA_j$ are computable
structures according to our listing, then if for some $\sigma$
and some $t$ we have $\Phi_{e,t}^\sigma \nin\
$Part$(\scrA_{i,t}, \scrA_{j,t})$, then for all $A\supset
\sigma$, $\Phi_e^A \nin\ $Part$(\scrA_i, \scrA_j)$. Also, if
$f$ is a bijection such that for all $n$, $f\res n \in\
$Part$(\scrA, \scrB)$, then $f: \scrA \iso \scrB$.

For $\sigma, \tau \in 2^{<\omega}$ we write $\sigma <_L \tau$
if $\sigma \subset \tau$ or if there is some $n$ such that
$\sigma(n)=0$, $\tau(n)=1$, and for all $k< n$,
$\sigma(k)=\tau(k)$. That is, if $\sigma$ comes before $\tau$
in the usual lexicographical order. We say a set $A$ is
left-c.e.\ if there is a computable sequence $\alpha_s \in
2^{<\omega}$, where $A(n) = \lim_s \alpha_s(n)$, and for all
$s$, $\alpha_s <_L \alpha_{s+1}$. Left c.e.\ sets have been
studied extensively in the area of algorithmic randomness (see
Nies \cite{Nies}). It is easy to see that all c.e.\ sets are
also left-c.e. The converse does not hold. However, every
left-c.e.\ set is Turing equivalent to a c.e.\ set (since $A
\equiv_T \{ \sigma \mid \sigma <_L A\}$), and it is this
feature of left-c.e.\ sets that we will make use of.

Finally, we use the following definition.

\begin{definition} Let $\scrA$ be a computable structure.
We define CatSpec$(\scrA)$ to be the set of degrees ${\bf d}$ such
that $\scrA$ is ${\bf d}$-computably categorical.
\end{definition}

\section{Basic construction}

It will follow from several of the results in this paper that
there is a degree $\dg{x} \leq_T \zdj$ which is not a degree of
categoricity.  However we will briefly sketch a proof of this
fact here, since the ideas we use are expanded on in the proofs
in sections \ref{gensec} and \ref{sigsec}.

For the proof, we will construct a noncomputable set $X$ such
that for all $m,k$ either $X$ does not compute an isomorphism
from $\sta_m$ to $\sta_k$, or there is a computable isomorphism
from $\sta_m$ to $\sta_k$.  Given the construction, suppose the
degree $\dg{x}$ of $X$ is a degree of categoricity, witnessed
by $\sta$.  Let $\stb$ be an arbitrary computable copy of
$\sta$.  Since $\sta$ is $\dg{x}$-computably categorical, $X$
computes an isomorphism from $\stb$ to $\sta$.  By the
construction, there is then a computable isomorphism from
$\stb$ to $\sta$.  Since $\stb$ was arbitrary, $\sta$ is
computably categorical, for a contradiction.

We will build $X$ by finite extensions using a $\esdj$ oracle.
At each stage we will use the $\esdj$ oracle to try to extend
$X$ to block some $\Phi^X_l$ from being an isomorphism.  If
such a block is not possible, we will argue that a computable
isomorphism can be found.

\begin{prop} There is a degree $\dg{x} \leq_T \zdj$ such that $\dg{x}$ is not a degree of categoricity. \label{Baseprop} \end{prop}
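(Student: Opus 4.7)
The plan is to construct by finite extensions with an $\esdj$ oracle a non-computable set $X\leq_T\esdj$ satisfying, for every triple $e,m,k$, the requirement
\[
N_{e,m,k}:\ \Phi_e^X\text{ is not an isomorphism }\sta_m\to\sta_k,\ \text{or there is a computable isomorphism }\sta_m\to\sta_k,
\]
together with the usual diagonalisation requirements $R_e:X\neq\varphi_e$. As sketched in the excerpt, once $X$ is built, if $\dg{x}=\deg(X)$ were a degree of categoricity witnessed by some $\sta=\sta_m$, every computable $\stb=\sta_k\cong\sta$ would admit an $X$-computable isomorphism $\Phi_e^X$; $N_{e,m,k}$ would then produce a computable isomorphism $\sta_k\to\sta_m$, so $\sta$ would be computably categorical and $\dg{x}=\dg{0}$, contradicting non-computability.

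The $R_e$ requirements are met in the standard way from the current string $\sigma$. For $N_{e,m,k}$ at $\sigma$, I first ask the $\Sigma_1$ question, decidable by $\es'\leq_T\esdj$: does there exist $\tau\supseteq\sigma$ with $\Phi_e^\tau\notin\mathrm{Part}(\sta_m,\sta_k)$? If yes, I commit to such a $\tau$; by the upward preservation noted in Section~2, $\Phi_e^X\notin\mathrm{Part}$ for every $X\supseteq\tau$, and so $\Phi_e^X$ is not an isomorphism. If no, I launch a computable back-and-forth procedure with parameter $\sigma$: set $g_0=\emptyset$, $\mu_0=\sigma$, and at stage $s$, alternately aiming to put the next natural number into $\mathrm{dom}(g)$ or into $\mathrm{range}(g)$, search for $\tau\supseteq\mu_s$ with the desired point in $\mathrm{dom}(\Phi_e^\tau)$ (respectively, in $\mathrm{range}(\Phi_e^\tau)$); on success, set $\mu_{s+1}=\tau$ and extend $g_s$ by the corresponding pair. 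Oracle monotonicity of $\Phi_e$ gives $g_s\subseteq\Phi_e^{\mu_s}$ throughout, and since every $\Phi_e^\tau$ with $\tau\supseteq\sigma$ lies in $\mathrm{Part}(\sta_m,\sta_k)$, each $g_s$ is itself a partial isomorphism.

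I then pose the $\Sigma_2$ question to $\esdj$: does this procedure ever fail to find a suitable $\tau$? If not, $g=\bigcup_s g_s$ is a total computable bijection each of whose finite restrictions is a partial isomorphism, so by the remark in Section~2, $g:\sta_m\cong\sta_k$ computably, and no further extension of $X$ is needed. If so, I use $\es'$ to compute the least failing stage $s$ together with the associated $\mu_s$ and commit $\sigma_{s+1}=\mu_s$; then for any $X\supseteq\mu_s$ we automatically have $\Phi_e^X\supseteq\Phi_e^{\mu_s}\supseteq g_s$, while no $\tau\supseteq\mu_s$ puts the target point into $\mathrm{dom}(\Phi_e^\tau)$ (or $\mathrm{range}(\Phi_e^\tau)$), so that point is missing from $\Phi_e^X$ and $\Phi_e^X$ is not an isomorphism. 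The main obstacle is precisely the formulation of this case split: finding an $\esdj$-query whose positive answer produces a computable witness via a $\Phi_e$-guided back-and-forth and whose negative answer yields a finite string that permanently disables $\Phi_e^X$ at a single point, with both sides leaning on oracle monotonicity and the upward preservation of partial-isomorphism failure.
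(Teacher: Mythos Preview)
Your proposal is correct and follows essentially the same route as the paper: a finite-extension construction below $\esdj$ that first poses a $\Sigma_1$ question about failure of $\Phi_e^\tau\in\mathrm{Part}(\sta_m,\sta_k)$, then a $\Sigma_2$ question about domain/range coverage, committing to a blocking string in the positive case and extracting a computable back-and-forth isomorphism in the negative case. The only cosmetic difference is that the paper phrases the $\Sigma_2$ query as the existence of some $\sigma\supseteq X_s$ and some $n$ omitted from the domain or range of $\Phi_e^\tau$ for every $\tau\supseteq\sigma$, whereas you launch the back-and-forth first and ask $\esdj$ whether it ever stalls; the two formulations are interchangeable.
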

\begin{proof}[Proof (sketch)] We build $X$ by finite extensions
using a $\esdj$ oracle. We start with $X_0 = \empstr$.  For
stage $s+1$, we will ensure that either $\Phi^X_l$ is not an
isomorphism from $\sta_m$ to $\sta_k$ or there is a computable
isomorphism from $\sta_m$ to $\sta_k$, where $s = \la l,m,k
\ra$.

We start stage $s+1$ by using $\es^\prime$ to diagonalize
against $X$ being computable by $\varphi_s$.  We then use
$\es^\prime$ to determine if there is a $\sigma \supseteq X_s$
and a time $t$ such that $\Phi^\sigma_{l,t}$ can be seen not to
be an injective homomorphism from $\sta_m$ to $\sta_k$.  If
there is, we let $X_{s+1} = \sigma$ and proceed to the next
stage.  If there is not, we ask $\esdj$ if there exists $\sigma
\supseteq X_s$ and $n \num$ such that for all $\tau \supseteq
\sigma$ we have $\Phi^\tau_l$ omits $n$ from its domain or
range.  If such a $\sigma$ exists, we note $\Phi^Y_l$ is not an
isomorphism for any  $Y \supseteq \sigma$, so we let $X_{s+1} =
\sigma$.

If the answer to both questions is no, then for any $\gamma
\supseteq X_s$ we have that $\Phi^\gamma_l$ is a partial
injective homomorphism and for every $n$ there is a $\tau
\supseteq \gamma$ with $n$ in the domain and range of
$\Phi^\tau_l$.  Note this $\tau$ also extends $X_s$, so these
properties also hold for $\tau$.  We let $\alpha_0 = X_s$ and
$\alpha_{n+1}$ be the first extension of $\alpha_n$ which puts
$n$ into the domain and range of $\Phi_l^{\alpha_{n+1}}$.
Letting $A = \bigcup_{n \in \omega} \alpha_n$ and $f =
\Phi^A_l$, we have that $f$ is a computable isomorphism from
$\sta_m$ to $\sta_k$.  Thus we let $X_{s+1} = X_s$ and move to
the next stage.

This completes our construction of $X$.  We have $X \leq_T
\esdj$, and as noted in the explanation before the proof, the
degree $\dg{x}$ of $X$ is not a degree of categoricity.
\end{proof}

\section{2-generic relative to some perfect tree} \label{gensec}

We wish to generalize Proposition \ref{Baseprop} to show that a
large class of sets have degrees that are not degrees of
categoricity.  To do this we will use the concept of sets that
are $n$-generic relative to some perfect tree.  Recall a set
$G$ is $n$-generic if for every $\Sigma^0_n$ set, either it meets
the set or some initial segment cannot be extended to meet the
set.

\begin{definition} A set $G$ is $n$-generic if for every $\Sigma^0_n$
 subset $S$ of $\strings$, either there is an $l$ such that
 $G \rstrd l \in S$, or there is an $l$ such that for all $\sigma \supseteq G \rstrd l$ we have $\sigma \notin S$. \end{definition}

We relativize this notion from $\strings$ to a perfect tree.

\begin{definition} A set $G$ is $n$-generic relative to the perfect
tree $T$ if $G$ is a path through $T$ and for every $\Sigma^0_n
(T)$ subset $S$ of $\strings$, either there is an $l$ such that
$G \rstrd l \in S$, or there is an $l$ such that for all
$\sigma \supseteq G \rstrd l$ with $\sigma \in T$ we have
$\sigma \notin S$. \end{definition}

\begin{definition} A set $G$ is $n$-generic relative to some
perfect tree if there exists a perfect tree $T$ such that $G$ is $n$-generic relative to $T$. \end{definition}

It has been shown that almost all sets are 2-generic relative to some perfect tree.

\begin{theorem}[Anderson \cite{andgen}] For any $n$, all but countably many sets are $n$-generic relative to some perfect tree. \end{theorem}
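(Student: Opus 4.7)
The plan is to prove the contrapositive: show that the set $B_n = \{G \in 2^\omega : G \text{ is not } n\text{-generic relative to any perfect tree}\}$ is countable. The approach is a direct uniform construction: given a ``typical'' $G \in 2^\omega$, explicitly build a perfect tree $T(G) \leq_T G^{(n)}$ with $G \in [T(G)]$ relative to which $G$ is $n$-generic; the exceptional $G$ for which this construction provably fails will be shown to form a countable set.

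The construction proceeds in stages. Fix a uniform enumeration $(W_e^X)_{e \in \omega}$ of $\Sigma_n(X)$ subsets of $\strings$. At stage $s$, I would maintain a finite perfect subtree $T_s$, a designated spine node $\sigma_s = G \res m_s$ in $T_s$, and finitely many off-spine branching leaves. To address requirement $R_s$ for the $s$-th $\Sigma_n(T)$ subset of $\strings$, I would query $G^{(n)}$: does some $\tau \subset G$ with $\tau \supset \sigma_s$ eventually enter $W_s^T$? If yes, extend the spine to such $\tau$ and set $\sigma_{s+1} = \tau$, meeting $R_s$; otherwise $G$ automatically avoids $W_s^T$ above $\sigma_s$. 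In parallel I would extend each off-spine leaf by two incomparable successors, so that the limit $T = \bigcup_s T_s$ is perfect and still contains $G$ as a path.

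The main obstacle is the self-referential character of the construction: each $\Sigma_n(T)$ set depends on $T$, yet $T$ is being built. I would resolve this by observing that the construction forces $T \leq_T G^{(n)}$, so membership of a finite string in $W_e^T$ reduces to a condition arithmetic in $G$ of bounded complexity, which a sufficient jump of $G$ can decide. A recursion-theorem-style fixed-point argument would then produce a coherent $T$ whose $\Sigma_n(T)$ subsets of $\strings$ indeed match those consulted during the construction.

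The hardest step will be bounding the exceptional set. I would argue that any $G$ for which the construction fails must satisfy one of countably many explicit arithmetic relations determined by the stage and requirement at which failure first occurs, and that each such relation is satisfied by at most countably many reals $G$, so that $B_n$ is a countable union of countable sets. Carrying out this accounting uniformly, and ruling out any $G$ that escapes classification, is the technical heart of the proof; I expect it to require a careful analysis of the possible failure modes analogous to the case analysis in the proof of Proposition~\ref{Baseprop}, lifted to the setting of perfect-tree forcing.
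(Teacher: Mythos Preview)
The paper does not prove this theorem; it is quoted from \cite{andgen} and stated without argument, so there is nothing in the present paper to compare your proposal against.

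Assessed on its own terms, the proposal has a concrete gap in the avoidance case. You assert that if no initial segment of $G$ extending $\sigma_s$ enters $W_s^T$, then ``$G$ automatically avoids $W_s^T$ above $\sigma_s$.'' But avoidance requires that no extension \emph{in $T$} of some $G\res l$ lie in the set, not merely that no initial segment of $G$ does; since you extend the off-spine leaves freely, nothing stops them from entering $W_s^T$. For a witness take $G=0^\omega$ and the computable set $S=\{\sigma:(\exists k)\,\sigma(k)=1\}$: no initial segment of $G$ is in $S$, yet every perfect tree containing $G$ must branch above each node and hence contain strings in $S$, so $G$ cannot avoid $S$ relative to any perfect tree. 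Your construction would wrongly declare this requirement handled; indeed $0^\omega$ is one of the genuine exceptions to the theorem.

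What is actually needed in the ``no'' case is to \emph{prune}: pass to a perfect subtree $T'\subseteq T$ with $G\in[T']$ whose nodes above $\sigma_s$ all miss $S$, and the construction fails exactly when no such $T'$ exists. That failure is where the countable exceptional set comes from, and accounting for it is the real content of the theorem. Your final paragraph acknowledges that this is the hard step but supplies no mechanism; the claim that each failure relation pins down only countably many reals is precisely what has to be proved and cannot be assumed. One needs an argument that when both meeting and perfect-subtree avoidance fail, $G$ is trapped among the countably many Cantor--Bendixson paths of a subtree with no perfect kernel through $G$, together with enough uniformity to sum over all stages despite the dependence of $T$ on $G$. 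Until the pruning step and this counting argument are made explicit, the proposal is a plausible outline rather than a proof.
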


We will prove that every degree containing a set that is
2-generic relative to some perfect tree is not a degree of
categoricity.  As a result we are able to limit the degrees of
categoricity to an easily defined countable class (distinct
from HYP).  It will follow as a corollary that for any degree
$\dg{x}$ there is a degree $\dg{y}$ with $\dg{x} \leq_T \dg{y}
\leq_T \dg{x}^{\prime \prime}$ such that $\dg{y}$ is not a
degree of categoricity.

We can view the proof of our theorem as the proof to
Proposition \ref{Baseprop} relativized twice, in successive
stages.  We first relativize the proof from a single
$\Delta^0_3$ set to any 2-generic set.  The idea is that if $G$
computes an isomorphism then we can find an initial segment $G
\rstrd l$ such that for every extension of $G \rstrd l$ the
answer to both of the questions we ask in the original proof is
no.  We can then build a computable isomorphism as in the
original proof.

We next relativize from every 2-generic to every 2-generic
relative to an arbitrary perfect tree $T$.  The key here is
that the proof of Proposition \ref{Baseprop} is stronger than required.  In the
original proof, we show that if $G$ computes an isomorphism
then there is a computable one.  It suffices to fix some $H
\not\geq_T G$ such that if $G$ computes an isomorphism then so
does $H$.  When we relativize to $T$ we obtain this for $T$ in
the place of $H$.

\begin{theorem} Let $G$ be 2-generic relative to some perfect tree.
Then the degree of $G$ is not a degree of categoricity. \label{Grpt} \end{theorem}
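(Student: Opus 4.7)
The plan is to follow the authors' hint and relativize the proof of Proposition~\ref{Baseprop} twice: first from $\esdj$ to an arbitrary 2-generic oracle, and then from plain genericity to genericity relative to $T$, with $T$ playing in the relativized argument the role that being computable played in the base proof. Suppose $G$ is 2-generic relative to the perfect tree $T$ and, for contradiction, that $\deg(G)$ is a degree of categoricity, witnessed by a computable structure $\sta$.

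The key claim to establish is that, for each triple $(l,m,k)$, if $\Phi_l^G$ is an isomorphism from $\sta_m$ to $\sta_k$, then there already exists a $T$-computable isomorphism from $\sta_m$ to $\sta_k$. Granting this, since $\sta$ is $G$-computably categorical, every computable $\stb \iso \sta$ admits a $G$-computable isomorphism to $\sta$ and hence, by the claim, a $T$-computable one; so $\sta$ is $T$-computably categorical, and the minimality clause in the definition of degree of categoricity forces $\deg(G) \leq \deg(T)$. But $G$ is in particular 1-generic relative to $T$, which via a standard diagonalization against $T$-computable functions yields $G \not\leq_T T$, a contradiction.

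To prove the claim, fix $(l,m,k)$ and apply 2-genericity of $G$ relative to $T$ successively to two $\Sigma_2(T)$ sets that mirror the two $\esdj$ questions in Proposition~\ref{Baseprop}. First let
\[
S_1 = \{\sigma \in T : \exists t,\ \Phi_{l,t}^\sigma \notin \mathrm{Part}(\sta_{m,t}, \sta_{k,t})\},
\]
which is already $\Sigma_1(T)$. Either some $G \res l_0 \in S_1$, in which case $\Phi_l^G$ fails to be a partial isomorphism and the claim holds vacuously; or some $G \res l_0$ has no extension in $T$ lying in $S_1$, so every $\tau \in T$ extending $G \res l_0$ makes $\Phi_l^\tau$ a partial isomorphism. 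In the latter case, set
\[
S_2 = \{\sigma \in T : \exists n\ \forall \tau \in T\ (\tau \supseteq \sigma \rightarrow n \notin \mathrm{dom}(\Phi_l^\tau) \vee n \notin \mathrm{range}(\Phi_l^\tau))\},
\]
which is $\Sigma_2(T)$; because $T$ is perfect, the inner universal quantifier is not vacuously satisfied. If some $G \res l_1 \in S_2$, then since every initial segment of $G$ lies in $T$, $\Phi_l^G$ omits the witnessing $n$ from its domain or range and the claim holds vacuously; otherwise we may take $l_1 \geq l_0$ with no extension of $G \res l_1$ in $T$ lying in $S_2$, and then for every $\tau \in T$ extending $G \res l_1$ and every $n$ there is a further extension $\tau' \in T$ placing $n$ in both the domain and range of $\Phi_l^{\tau'}$. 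Setting $\alpha_0 = G \res l_1$ and letting $\alpha_{n+1}$ be the first extension of $\alpha_n$ in $T$ that puts $n$ into both the domain and range of $\Phi_l^{\alpha_{n+1}}$, the set $A = \bigcup_n \alpha_n$ is $T$-computable and $\Phi_l^A$ is the desired $T$-computable isomorphism from $\sta_m$ to $\sta_k$.

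The main obstacle I anticipate is the bookkeeping around the relativization: classifying $S_1$ and $S_2$ at the correct levels of the $T$-relativized arithmetic hierarchy and combining the two applications of 2-genericity coherently, using that $G$ is a path through $T$ (so all initial segments of $G$ lie in $T$) and that $T$ is perfect (so the inner universal quantifier in $S_2$ has content). The conceptual shift from the base construction is that we no longer extract a computable isomorphism but only a $T$-computable one, which suffices precisely because genericity relative to $T$ guarantees $G \not\leq_T T$.
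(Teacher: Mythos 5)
Your proposal is correct and follows essentially the same route as the paper: use genericity to find an initial segment of $G$ below which a $T$-computable isomorphism can be built, conclude that $\sta$ is $T$-computably categorical, and contradict $G \not\leq_T T$. The only cosmetic difference is that you apply genericity twice (to a $\Sigma_1(T)$ set and a $\Sigma_2(T)$ set), whereas the paper folds both conditions into a single $\Sigma_2(T)$ set and applies genericity once.
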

\begin{proof} Let $G$ be 2-generic relative to the perfect tree $T$.
Suppose the degree of $G$ is a degree of categoricity,
witnessed by $\sta$.  Let $\stb$ be an arbitrary computable
structure such that $\sta$ is isomorphic to $\stb$.  We will
show there is an isomorphism $f: \sta \to \stb$ with $f \leq_T
T$.  Since our choice of $\stb$ is arbitrary, we can then
conclude $T \in$ CatSpec($\sta$), so $T \geq_T G$ for a
contradiction.

Let $\Psi$ be such that $\Psi^G$ is an isomorphism from $\sta$
to $\stb$.  Let $R_s$ be the set of strings $\sigma$ such that
$\Psi_s^\sigma$ contains values contradicting it being a
partial injective homomorphism from $\sta$ to $\stb$, i.e.\ $\Psi_s^\sigma \notin \text{Part}(\sta, \stb)$.  We note
$R_s$ is (uniformly) computable.  Let
\begin{eqnarray*} & S = \{\sigma \in \strings \setsep \exists n \, \forall  s \, \forall \tau \in T [\tau \supseteq \sigma \to & \\ & (\tau \in R_s \ \vee\  n \notin \text{dom}(\Psi_s^\tau) \ \vee\  n \notin \text{ran}(\Psi_s^\tau))]\} & \end{eqnarray*}

We note $S$ is $\Sigma^0_2 (T)$.   Suppose for some $j$ we have
$G \rstrd j \in S$, witnessed by $n$.  Since $\Psi^G$ is an
isomorphism from $\sta$ to $\stb$, let $m>j$ and $s$ be large
enough so that $n$ is in the domain and range of $\Psi_s^{G
\rstrd m}$.  Then letting $\tau$ be $G \rstrd m$ we have $G
\rstrd m \in R_s$, contradicting $\Psi^G$ being an isomorphism.
We conclude that $G$ does not meet $S$.

By genericity, there is an $l$ such that for all $\sigma \in T$
with $\sigma \supseteq G \rstrd l$ we have $\sigma \notin S$.
Hence we have:
\begin{eqnarray} & \forall \sigma \in T \, [\sigma \supseteq G \rstrd l \to \forall n \, \exists s \, \exists \tau \in T & \nonumber\\ & [\tau \supseteq \sigma \ \wedge\ \tau \notin R_s \ \wedge\ n \in \text{dom}(\Psi_s^\tau) \ \wedge\  n \in \text{ran}(\Psi_s^\tau)]] & \label{notS} \end{eqnarray}

We can now construct our isomorphism $f: \sta \to \stb$ with $f
\leq_T T$ to complete the proof.  We will $T$-computably build
$A = \bigcup_{i \in \omega} \alpha_i$ so that $f = \Psi^A$.

Let $\alpha_0$ be $G \rstrd l$.  Given $\alpha_i$, let
$\alpha_{i+1}$ be the first $\tau$ we find satisfying
(\ref{notS}) with $i$ for $n$ and $\alpha_i$ for $\sigma$.  We
note that every $\alpha_i \supseteq G \rstrd l$ (and $\alpha_i
\in T$), so finding a $\tau$ which satisfies (\ref{notS}) is
always possible.  This completes our construction.

We note $A \leq_T T$ so $f \leq_T T$.  From the construction it
is clear that $f$ is total and surjective.  To show that $f$ is
an isomorphism, it suffices to show that for all $i$ and $t$ we
have $\alpha_i \notin R_t$.

Suppose $\alpha_i \in R_t$ for some $i,t$.  Let $j>i$ be
sufficiently large such that $j \in$ dom($\Psi_s^A$) requires
$s>t$.  We then have $\alpha_j \notin R_s$ for some $s>t$ so
$\alpha_j \notin R_t$.  Since $\alpha_i \subseteq \alpha_j$ we
have $\alpha_i \notin R_t$ for a contradiction.  We conclude
that for all $i,t$, we have $\alpha_i \notin R_t$.  Thus $f$ is
an isomorphism from $\sta$ to $\stb$.

As noted at the start of the proof, this implies $G \leq_T T$
for a contradiction.  We conclude $G$ is not a degree of
categoricity.
\end{proof}

\begin{cor} Let $A$ be a set and let $G$ be 2-generic($A$).
Then the degree of $G \oplus A$ is not a degree of categoricity. \label{GplusA} \end{cor}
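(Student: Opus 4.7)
The plan is to reduce the corollary to Theorem \ref{Grpt} by exhibiting an $A$-computable perfect tree $T$ through which $G \oplus A$ is a path and with respect to which $G \oplus A$ is 2-generic. The natural candidate is the tree pinning down $A$ at the odd positions,
$$T = \{\tau \in \strings \setsep \tau(2n+1) = A(n) \text{ for all } 2n+1 < |\tau|\}.$$
Every $\tau \in T$ has two incomparable extensions within $T$, obtained by extending $\tau$ up to the next even (free) position and branching there, filling the required $A$-bits along the way. So $T$ is perfect, $T \equiv_T A$, and $G \oplus A$ is visibly a path through $T$.

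To verify 2-genericity of $G \oplus A$ relative to $T$, I would take an arbitrary $\Sigma_2(T)$ set $S \subseteq \strings$ (equivalently, $\Sigma_2(A)$) and pull it back along the parametrization of $T$-strings by their $G$-side bits, obtaining two $\Sigma_2(A)$ subsets of $\strings$:
\begin{align*}
S_0 &= \{\rho \in \strings \setsep \rho \oplus (A \rstrd |\rho|) \in S\}, \\
S_1 &= \{\rho \in \strings \setsep |\rho| \geq 1 \text{ and } ((\rho \rstrd (|\rho|-1)) \oplus (A \rstrd (|\rho|-1))) \concat \langle \rho(|\rho|-1)\rangle \in S\}.
\end{align*}
Here $S_0$ captures the even-length $T$-strings and $S_1$ the odd-length ones, each viewed as an ordinary string indexed by its $G$-bits only.

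Apply 2-genericity of $G$ over $A$ to each of $S_0$ and $S_1$ in turn. If $G$ meets $S_i$ at some length $k$, then the corresponding initial segment of $G \oplus A$ (of length $2k$ for $i=0$, or $2k-1$ for $i=1$) lies in $S$, so $G \oplus A$ meets $S$. Otherwise there are $l_0, l_1$ such that no extension of $G \rstrd l_i$ lies in $S_i$; taking $l = \max(l_0, l_1)$, the bijection between $T$-strings of length $\geq 2l$ extending $(G \oplus A) \rstrd 2l$ and strings extending $G \rstrd l$ transports the strong avoidance, so no $\tau \in T$ extending $(G \oplus A) \rstrd 2l$ can lie in $S$. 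Hence $G \oplus A$ is 2-generic relative to $T$, and Theorem \ref{Grpt} yields that the degree of $G \oplus A$ is not a degree of categoricity.

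The only step requiring care is the parity case split in setting up the pullback sets $S_0$ and $S_1$; once those correspondences are right, the argument is a direct translation of 2-genericity from $\strings$ to $T$, and I anticipate no serious obstacle.
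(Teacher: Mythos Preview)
Your proposal is correct and follows the same approach as the paper: define the perfect tree $T$ of strings compatible with $A$ on odd coordinates, observe $G\oplus A$ is $2$-generic relative to $T$, and invoke Theorem~\ref{Grpt}. The paper simply asserts the genericity of $G\oplus A$ relative to $T$ without proof, whereas you supply the verification via the pullback sets $S_0,S_1$; this added detail is sound and the parity bookkeeping is handled correctly.
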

\begin{proof} Let $T = \{ \sigma \in \strings \setsep \exists
\tau \in \strings \, [\sigma \subseteq \tau \oplus A]\}$. Then
$G \oplus A$ is 2-generic relative to $T$ so by Theorem
\ref{Grpt}, the degree of $G \oplus A$ is not a degree of
categoricity.
\end{proof}

\begin{cor} Let $\dg{x}$ be any Turing degree. Then there exists
$\dg{y}$ with\\$\dg{x} \leq_T \dg{y} \leq_T \dg{x^{\prime\prime}}$
such that $\dg{y}$ is not a degree of categoricity. \end{cor}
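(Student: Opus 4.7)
The plan is to reduce this directly to Corollary \ref{GplusA}. Fix any representative $X \in \dg{x}$. It suffices to construct a set $G$ that is 2-generic relative to $X$ with $G \leq_T X''$, and then set $\dg{y} = \deg(G \oplus X)$. The inequalities are then immediate: $\dg{x} \leq_T \dg{y}$ because $X \leq_T G \oplus X$, and $\dg{y} \leq_T \dg{x}''$ because both $G$ and $X$ are computable from $X''$. Corollary \ref{GplusA} then tells us that $\dg{y}$ is not a degree of categoricity.

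The only nontrivial step is producing $G \leq_T X''$ that is 2-generic over $X$, and this is a standard finite-extension argument that I would carry out as follows. Fix an effective (in $X$) enumeration $\{S_e\}_{e \in \omega}$ of all $\Sigma_2^X$ subsets of $\strings$. Build a computable-in-$X''$ chain $\tau_0 \subseteq \tau_1 \subseteq \cdots$ with $G = \bigcup_e \tau_e$. At stage $e+1$, use the $X''$ oracle to decide the $\Sigma_3^X$ question: does there exist $\sigma \supseteq \tau_e$ with $\sigma \in S_e$? If yes, $X''$ can also find a witness (again a $\Sigma_3^X$ search), and we set $\tau_{e+1} = \sigma$, so $G$ meets $S_e$. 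If no, we set $\tau_{e+1} = \tau_e$, and then no extension of $\tau_{e+1}$ lies in $S_e$, so genericity is satisfied at $S_e$ by avoidance. Either way, the 2-genericity condition is met for $S_e$, and the whole construction is $X''$-computable.

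Once $G$ is constructed, the corollary applies verbatim. I do not see any real obstacle: the finite-extension construction of $G$ is routine, the two oracle computations $X \leq_T X''$ and $G \leq_T X''$ are trivial, and the application of Corollary \ref{GplusA} is immediate. The only ``content'' here is the observation that the bound $X''$ suffices to build a 2-generic over $X$, which is why the corollary places $\dg{y}$ exactly in the interval $[\dg{x}, \dg{x}'']$.
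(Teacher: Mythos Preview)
Your approach is exactly the paper's: pick $X\in\dg{x}$, take $G\leq_T X''$ that is 2-generic relative to $X$, set $\dg{y}=\deg(G\oplus X)$, and invoke Corollary~\ref{GplusA}. The paper simply cites the existence of such a $G$ as folklore, whereas you spell out the finite-extension construction.

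One small correction in your sketch: the question ``does some $\sigma\supseteq\tau_e$ lie in $S_e$?'' is $\Sigma_2^X$, not $\Sigma_3^X$, since an additional existential quantifier over a $\Sigma_2^X$ predicate stays $\Sigma_2^X$. This matters, because $X''$ cannot in general decide $\Sigma_3^X$ questions (those are only c.e.\ in $X''$); it is precisely because the question is $\Sigma_2^X$ that $X''$ suffices, and likewise the witness search (checking $\sigma\in S_e$ for each $\sigma$ in turn) is $X''$-computable. With that label fixed, your argument is correct and complete.
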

\begin{proof} Let $X$ be a set of degree $\dg{x}$ and let
$G \leq_T X^{\prime \prime}$ be 2-generic($X$). Let $\dg{y}$
be the degree of $X \oplus G$.  Then $\dg{x} \leq_T \dg{y}
\leq_T \dg{x^{\prime\prime}}$ and by Corollary \ref{GplusA},
$\dg{y}$ is not a degree of categoricity.
\end{proof}

\section{Hyperimmune-free}

Recall that a degree $\dg{y}$ is hyperimmune-free if every
function $f \leq_T \dg{y}$ can be bounded by a computable
function (see \cite{Soare}).  We note that all known degrees of
categoricity $\dg{x}$ are such that $\dg{0}^{(\gamma)} \leq_T
\dg{x} \leq_T \dg{0}^{(\gamma + 1)}$ for some ordinal $\gamma$,
and hence are hyperimmune (or computable).  This suggests the
question, is there a (noncomputable) degree of categoricity
which is hyperimmune-free?  We show that no such degree exists.

\begin{theorem} Let $\dg{b}$ be a noncomputable hyperimmune-free degree.  Then $\dg{b}$ is not a degree of categoricity. \end{theorem}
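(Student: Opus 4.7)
Assume for contradiction that $\dg{b}$ is a degree of categoricity witnessed by a computable structure $\sta$, and fix $B\in\dg{b}$.  Since $\dg{b}\neq\dg{0}$, $\sta$ is not computably categorical, so we can pick a computable $\stb\cong\sta$ admitting no computable isomorphism to $\sta$.  By $\dg{b}$-computable categoricity some $\Phi_e^B\colon\stb\to\sta$ is an isomorphism, and hyperimmune-freeness of $B$ lets us dominate the use function of $\Phi_e^B$ by a computable function $g$ and the preimage function $n\mapsto\min\{m:\Phi_e^B(m)=n\}$ by a computable function $h$.

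Mirroring Proposition \ref{Baseprop} and Theorem \ref{Grpt}, I would form the computable tree $T\subseteq 2^{<\omega}$ of strings $\sigma$ for which (i) for every $k$ with $g(k)\le|\sigma|$ the computation $\Phi_e^\sigma(k)$ converges and the collected values form a partial injective homomorphism $\stb\to\sta$, and (ii) for every $n$ with $g(h(n))\le|\sigma|$ some $m\le h(n)$ satisfies $\Phi_e^\sigma(m)=n$.  Then $T$ is computable, $B\in[T]$ (so $T$ is infinite), and every $X\in[T]$ makes $\Phi_e^X$ a genuine isomorphism $\stb\to\sta$.

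The decisive step is to extract a computable isomorphism from $T$, contradicting the choice of $\stb$ and hence the existence of $\sta$.  The main obstacle is that an infinite computable binary tree need not have a computable path---a priori $[T]$ could be a special $\Pi^0_1$ class with no computable member.  My plan is to use both halves of the hypothesis on $\dg{b}$ here: noncomputability of $B$ prevents $B$ from being isolated in $[T]$, giving the tree enough branching near $B$ for a finitary analysis, while hyperimmune-freeness forces the ``splitting heights'' along $B$ to be bounded by a computable function, so that the finite computable sets $S_n:=\{\Phi_e^\sigma\res\{0,\dots,n\}:\sigma\in T\cap 2^{g(n)}\}$ of candidate partial isomorphisms can be controlled uniformly.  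I would then aim to show that either $(S_n)$ stabilizes on a single coherent partial isomorphism---yielding a computable $f$ directly---or the way $B$ picks its branch through a non-stabilizing $(S_n)$ encodes a $B$-computable function that outgrows every computable function, violating hyperimmune-freeness.  Making this dichotomy precise, so that the non-stabilizing case really does produce a fast-growing $B$-computable function, will be the hardest part of the proof.
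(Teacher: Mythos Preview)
Your tree $T$ is set up correctly (modulo closing it downward), and indeed every $X\in[T]$ yields an isomorphism $\Phi_e^X:\stb\to\sta$.  The gap is in the final step: you try to extract a \emph{computable} isomorphism from $[T]$, and the dichotomy you sketch does not go through.  The class $[T]$ is simply a nonempty $\Pi^0_1$ class containing the hyperimmune-free set $B$, and such a class can perfectly well have no computable member.  Your proposed argument that non-stabilization of the $S_n$ would produce a fast-growing $B$-computable function is the problem: the ``splitting heights'' along $B$ are indeed dominated by a computable function (that is essentially what hyperimmune-freeness says), but this in no way forces a computable path through $T$, nor does it force the images $\Phi_e^X$ for $X\in[T]$ to collapse to a single computable function.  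In fact the implicit claim underlying your plan --- that if some hyperimmune-free degree computes an isomorphism between two fixed computable copies then a computable isomorphism exists --- is false in general.

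The paper avoids this obstacle by not aiming for a computable isomorphism at all.  Instead it builds a computably bounded tree in $\omega^{<\omega}$ whose paths \emph{are} (codes of) isomorphisms, using a computable bound on $f$ and $f^{-1}$ rather than on the use of $\Phi_e^B$.  Any computably bounded infinite tree has a $\es'$-computable path, so one gets an isomorphism $g\leq_T\es'$.  Since $\stb$ was arbitrary, $\es'$ lies in the categoricity spectrum of $\sta$, whence $\dg{b}\leq_T\dg{0}'$.  But every noncomputable degree below $\dg{0}'$ is hyperimmune (Miller--Martin), contradicting the hypothesis on $\dg{b}$.  Your construction can be repaired along the same lines: your $[T]\subseteq 2^\omega$ is a nonempty $\Pi^0_1$ class, so the Low Basis Theorem gives a low $X\in[T]$, and then $\Phi_e^X\leq_T\es'$ is the desired isomorphism; finish with the same appeal to Miller--Martin.
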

\begin{proof} Let $\dg{b}$ be a noncomputable hyperimmune-free degree, and assume
for a contradiction that $\dg{b}$ is a degree of categoricity.
Let $\sta$ witness that $\dg{b}$ is a degree of categoricity.
Let $\stb$ be an arbitrary computable structure such that
$\sta$ is isomorphic to $\stb$. We will show there is an
isomorphism $g: \sta \to \stb$ such that $g \leq_T \es^\prime$.
Since $\stb$ is arbitrary, we will then have $\dg{0'} \in$
CatSpec($\sta$). Hence $\dg{b} \leq \dg{0'}$, contradicting
$\dg{b}$ being noncomputable and hyperimmune-free.  Therefore
it suffices to show there exists such a $g$.

Let $f:\omega \to \omega$ be an isomorphism from $\sta$ to
$\stb$ with $f \leq_T \dg{b}$.  We note since $f$ is bijective,
$f^{-1} \leq_T \dg{b}$.  Since $\dg{b}$ is hyperimmune-free,
let $h$ be a computable function which dominates $f$ and
$f^{-1}$.

We now use $h$ to build an infinite computably bounded tree $T
\subset \omega^{< \omega}$ whose infinite paths code
isomorphisms between $\scrA$ and $\scrB$. Then $[T]$ must have
a $\es'$-computable member (indeed, a low member), so there
exists $g \leq_T \es'$ with $g : \scrA \iso \scrB$ as desired.

The infinite paths through $T$ will code isomorphisms by having
the map from $\scrA$ to $\scrB$ on the even bits, and the
inverse map on the odd bits. For $\sigma \in \baistr$, let
$\sigma_0(n) = \sigma(2n)$ and $\sigma_1(n) = \sigma(2n +1)$.
Let $T$ be defined by:
\begin{eqnarray*}T = \{ \sigma \in \baistr \setsep \forall n \leq \len
(\sigma) [[\sigma(n) \leq h(\lfloor\frac{n}{2}\rfloor)] \wedge
\sigma_0 \in \mbox{Part}(\scrA, \scrB) \wedge \\
\sigma_1 \in \mbox{Part}(\scrB, \scrA) \wedge (i\neq j \rightarrow
(\sigma_i(\sigma_j(n)) = n \vee \sigma_i(\sigma_j(n))\up))] \}\end{eqnarray*}
Then $T$ is a computably bounded tree. Let $\tilde{f}(2n) =
f(n)$ and $\tilde{f}(2n+1) = f^{-1} (n)$. Then $\tilde{f} \in
[T]$, so $T$ is infinite. Let $\tilde{g} \in T$ be such that
$\tilde{g} \leq_T \es'$. Let $g(n) = \tilde{g}(2n)$. Then $g
\leq_T \tilde{g} \leq_T \es'$ and $g: \scrA \iso \scrB$ as
desired.
\end{proof}

\section{$\Sigma^0_2$ degree} \label{sigsec}

\begin{theorem} There is a $\Sigma^0_2$ degree that is not a degree of categoricity. \label{sigthm} \end{theorem}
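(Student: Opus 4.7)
The goal is to construct a noncomputable set $X$ such that for every $l, m, k$ either $\Phi^X_l$ is not an isomorphism from $\sta_m$ to $\sta_k$ or a computable isomorphism $\sta_m \to \sta_k$ exists, and whose degree is $\Sigma_2$. By the reasoning of Proposition~\ref{Baseprop}, these properties force $\deg(X)$ not to be a degree of categoricity.

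The plan is to redo the Proposition~\ref{Baseprop} construction using only a $\es'$ oracle and to arrange that $X$ is left-c.e.\ relative to $\es'$. Concretely, I will produce an $\es'$-computable sequence $\alpha_s \in \strings$ with $\alpha_s <_L \alpha_{s+1}$ and $X(n) = \lim_s \alpha_s(n)$. Then $\{\sigma \mid \sigma <_L X\}$ is c.e.\ in $\es'$ (enumerate each $\sigma$ seen to be $<_L \alpha_s$ for some $s$), hence $\Sigma_2$, and as noted in Section~2, $X \equiv_T \{\sigma \mid \sigma <_L X\}$, so $\deg(X)$ is $\Sigma_2$.

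At stage $s = \la l, m, k \ra$ I address requirement $R_{l,m,k}$ together with a diagonalization sub-step against $\varphi_s$. The $\Sigma_1$ question of Proposition~\ref{Baseprop}---is there a $\sigma$ extending the current $\alpha_s$ with $\Phi^\sigma_{l,t} \notin \mbox{Part}$?---is answerable by $\es'$, and a positive answer triggers a rightward $<_L$-move of the approximation to such a $\sigma$. The $\Sigma_2$ question---is there $\sigma$ and $n$ such that no $\tau \supseteq \sigma$ puts $n$ in both the domain and range of $\Phi^\tau_l$?---cannot be answered with $\es'$ alone. I instead tentatively guess ``no'', leaving $\alpha$ essentially in place, while continuing to search with $\es'$ for a $\Sigma_2$ witness at later stages. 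If one is ever enumerated, I revise by moving $\alpha$ rightward in $<_L$ to a string extending the witness, potentially injuring lower-priority requirements that must then be re-attacked.

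The main obstacle is coordinating these revisions with the left-c.e.\ structure so that the approximation converges and each requirement is eventually satisfied. Since each $R_{l,m,k}$ can be revised at most once by its own $\Sigma_2$ witness (such a witness, if it exists, is discovered once and for all), a standard finite-injury priority argument gives convergence of $\alpha_s$, stabilization of each bit of $X$, and eventual satisfaction of every requirement. The resulting $X$ is noncomputable, has $\Sigma_2$ degree by the left-c.e.-in-$\es'$ structure, and either blocks $\Phi^X_l$ from being an isomorphism or permits a computable one for each $(l,m,k)$, yielding the desired $\Sigma_2$ degree that is not a degree of categoricity.
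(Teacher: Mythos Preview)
Your proposal is correct and follows essentially the same approach as the paper: build $D$ left-\ce in $\es'$, answer the $\Sigma_1$ Part-question directly with $\es'$, and handle the $\Sigma_2$ question by a guess-then-revise finite-injury scheme so that the approximation is left-\ce in $\es'$. The one implementation point you leave implicit but the paper makes explicit is that when tentatively guessing ``no'' one sets $\delta_{s+1} = \delta_s\conc 0$ while reserving $\sigma_{\la e,i,j\ra} = \delta_s\conc 1$, and thereafter restricts the search for a $\Sigma_2$ witness to extensions of $\sigma_{\la e,i,j\ra}$; this is precisely what guarantees that any later revision is a genuine $<_L$-rightward move compatible with the left-\ce structure.
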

\begin{proof}
We build a set $D$ with $\Sigma^0_2$-degree $\dg{d}$ that is
not a degree of categoricity. This time, instead of a $\es''$-oracle construction, we build
$D$ to be left-\ce in $\es'$.  We again meet the requirements:

$R_{\la e, i, j \ra}:$ If $\Phi_e^D: \scrA_i \iso \scrA_j$ then
there is a computable isomorphism between $\scrA_i$ and
$\scrA_j$.

If, for example, $R_{\la e, i, j\ra}$ were the highest priority
requirement, we would ask $\es'$, $(\exists \sigma \supseteq 1)
(\exists t)( \Phi_{e,t}^\sigma\nin$
Part$(\scrA_{i,t},\scrA_{j,t})$? If yes, we would extend to
$\sigma$.

If no, then (while letting $\delta_0 = 0$ and also addressing
other requirements) at all subsequent stages $s$ we would ask
$\es'$, $(\forall \sigma \supseteq 1, |\sigma| = s)( \forall k
\leq s)( \exists \tau \supseteq \sigma)( \exists t) (k \in
\text{dom}\Phi^\tau_{e,t}\wedge  k \in
\text{rng}\Phi^\tau_{e,t})?$

If the answer is always ``yes'', then we can use $\Phi_e$ to build
a computable isomorphism between $\scrA_i$ and $\scrA_j$.

If the answer is ``no" at some stage, then at that stage we set
$\delta_s = \sigma$ such that $\Phi_e^\sigma$ cannot be
extended to an isomorphism. This is a move that is left-c.e. in
$\es'$, and causes injury to lower priority requirements.

We now give the formal construction. Of course, in addition to
the $R_{\la e, i, j\ra}$ requirements discussed above, we must
also meet non-computability requirements:

$N_e: D \not=\varphi_e$.

At each stage $s$, requirements of the form $R_{\la e, i, j
\ra}$ will either be \emph{unsatisfied}, \emph{under
consideration}, or \emph{satisfied}. Requirements of the form
$N_e$ will either be \emph{unsatisfied} or \emph{satisfied}.
The status of each requirement will change at most finitely
often. We will have $\delta_s$ denote the stage $s$
approximation to $D$. We will either have $\delta_{s+1} \supset
\delta_s$, or $\delta_{s+1} \supseteq \sigma_{\la e, i, j\ra }$
where $\delta_s \supseteq \delta_k\conc 0$ and $\sigma_{\la e,
i, j\ra} \supseteq \delta_k\conc 1$ for some $k <s$, so that
the approximation will be left-\ce in $\es'$.

\emph{Stage 0}: Set $\delta_0 = \es$.

\emph{Stage s+1=2m +1}: For each $R_{\la e,i,j\ra}$ currently
under consideration, in turn, ask $\es'$, $(\forall \sigma
\supseteq \sigma_{\la e, i, j\ra}, |\sigma| = s)( \forall k
\leq s)( \exists \tau \supseteq \sigma)( \exists t) (k \in
\text{dom}\Phi^\tau_{e,t}\wedge  k \in
\text{rng}\Phi^\tau_{e,t})?$ If there is a least $R_{\la e,i, j
\ra }$ for which the answer is ``no", then let $\delta_{s+1}
\supseteq \sigma_{\la e, i, j\ra}$ be such that $(\exists
k)(\forall \tau \supseteq \delta_{s+1})(k \nin
\text{dom}\Phi^\tau_{e}\vee k \nin \text{rng}\Phi^\tau_{e})$.
Declare $R_{\la e, i, j\ra}$ to be satisfied. Declare $N_n$ and
$R_n$ to be unsatisfied for all $n > \la e,i, j\ra$, canceling
any associated $\sigma_n$ for those $R_n$ that were under
consideration.

If the answer was always ``yes", let $\la e, i, j \ra$ be least
such that $R_{\la e, i, j\ra}$ is unsatisfied. Ask $\es'$,
$(\exists \sigma \supseteq \delta_s\conc1) (\exists t)(
\Phi_{e,t}^\sigma\!\!\nin\,$Part$(\scrA_{i,t},\scrA_{j,t})$? If the
answer is ``yes", let $\delta_{s+1}$ have this property, and
declare $R_{\la e,i,j \ra}$ to be satisfied. If the answer is
``no", let $\sigma_{\la e, i, j \ra} = \delta_s\conc1$, let
$\delta_{s+1}=\delta_s\conc0$, and declare $R_{\la e, i, j\ra}$
to be under consideration.

\emph{Stage s+1=2m +2}: Let $n$ be least such that $N_n$ is not
satisfied. Use $\es'$ to determine whether $(\exists
t)(\varphi_{n,t}(|\delta_s|)= 0)$. If yes, define $\delta_{s+1}
= \delta_s\conc1$. Otherwise, define
$\delta_{s+1}=\delta_s\conc0$. Declare $N_n$ to be satisfied.

This completes the construction.

\begin{lemma}The approximation $\{\delta_s\}_{s\in \omega}$
defines a set $D$ that is left-\ce in $\es'$.
\end{lemma}
\begin{proof}
The construction is $\es'$-computable, so the sequence
$\{\delta_s\}_{s\in \omega}$ is $\es'$-computable. We either
have $\delta_{s+1} \supset \delta_s$, or we have $\delta_{s+1}
\supseteq \sigma_{\la e, i, k\ra}$ for some $R_{\la e, i,
k\ra}$ that was under consideration at stage $s+1$. In the
first case, certainly $\delta_{s} <_L \delta_{s+1}$. In the
second case, note that at the greatest stage $t+1\leq s$ where
$\sigma_{\la e, i, j \ra}$ was defined, we had let
$\delta_{t+1} = \delta_t\conc0$ and $\sigma_{\la e, i, j \ra}=
\delta_t \conc1$. As $R_{\la e, i, j\ra}$ remained under
consideration between stages $t+1$ and $s+1$, there was no
shift to $\sigma_k$ for any $k \leq \la e, i, j \ra$ between
stages $t+1$ and $s$. It is easy to see by induction on $t+1
\leq t' \leq s$ that $\delta_{t'} \supseteq \delta_{t+1}$ and
that if $\sigma_n$ was defined at stage $t'$ then $\sigma_n
\supseteq \delta_{t'}$. So $\delta_{s} \supseteq \delta_t
\conc0$ and $\delta_{s+1} \supseteq \delta_t\conc1$. That is,
the approximation $\{\delta_s\}_{s\in \omega}$ is left-\ce in
$\es'$.
\end{proof}

\begin{lemma} For each $n$, there is a stage $s$ after which
the status of requirement $R_n$ ceases to change. Each
requirement $R_n$ is met.
\end{lemma}
\begin{proof}
Consider the requirement $R_{\la e, i, j \ra}$, and let $s$ be
the least stage by which the status of all $R_n$ for $n< \la e,
i, j \ra$ cease to change. Note that $R_{\la e, i, j \ra}$ had
status ``unsatisfied" at stage $s$. At stage $s+1$, the status
of $R_{\la e, i, j \ra}$ becomes ``satisfied" or ``under
consideration". The only way for the status of $R_{\la e, i,
j\ra}$ to change back to unsatisfied would be if there was a
change in status for some $R_n$ with $n < \la e, i, j \ra$. So
$R_{\la e, i, j\ra}$ is never again unsatisfied.

Suppose there is a stage greater than $s$ where $R_{\la e, i, j
\ra}$ becomes satisfied. Let $t$ be the least such stage. At
stage $t$ we set $\delta_t$ such that $\Phi_e^{\delta_t} \nin
\text{Part}(\scrA_{i,t}, \scrA_{j,t})$, or such that there
exists $k$ such that for all $\tau \supseteq \gamma_t$, $k \nin
\text{dom}\Phi^\tau_e$ or $k \nin \text{rng}\Phi^\tau_e$. Since
there is no change in status for any $R_n$ with $n < \la e, i,
j \ra$ beyond stage $s$, it is easy to see by induction on $t'
> t$ that $\delta_{t'} \supseteq \delta_{t+1}$ and that if
$\sigma_n$ was defined at stage $t'$ then $\sigma_n \supseteq
\delta_{t'}$. So $D \supset \delta_t$ and $\Phi_e^D \nin
\text{Part}(\scrA_i, \scrA_j)$, i.e., $R_{\la e, i, j \ra}$ is
met, and indeed has the status satisfied at all stages beyond
$t$.

Suppose there is no stage greater than $s$ where $R_{\la e, i,
j\ra}$ becomes satisfied. In this case, $R_{\la e, i, j\ra}$
maintains status under consideration at all stages greater than
$s$, and $\sigma_{\la e, i, j \ra}$ does not change after it is
defined at stage $s+1$. We now show that $R_{\la e, i, j \ra}$
is met, by building a computable isomorphism $f: \scrA_i \rt
\scrA_j$. First note that $(\forall \sigma \supset \sigma_{\la
e, i, j \ra})(\forall t)(\Phi_{e,t}^\sigma \in
\text{Part}(\scrA_{i,t}, \scrA_{j,t}))$. Since $R_{\la e, i, j
\ra}$ remains under consideration at all stages $n > s$,
$(\forall \sigma \supseteq \sigma_{\la e, i, j\ra}, |\sigma| =
n)( \forall k \leq n)( \exists \tau \supseteq \sigma)( \exists
t) (k \in \text{dom}\Phi^\tau_{e,t}\wedge  k \in
\text{rng}\Phi^\tau_{e,t})$. Let $\tau_0 = \sigma_{\la e, i,
j\ra}$. Given $\tau_n$, let $\tau_{n+1} \supset \tau_n$ be such
that $n \in \text{dom}\Phi^\tau_{e}$ and $n \in
\text{rng}\Phi^\tau_{e}$, and $|\tau_{n+1}| \geq n+1$. Since we
know such $\tau_{n+1}$ exists, we can search and find such
effectively. Let $B= \cup_{n \in \omega} \tau_n$. Then $B$ is
computable, and $\Phi_e^B: \scrA_i \iso \scrA_j$.

\end{proof}
\begin{lemma} The requirements $N_n$ are all met.
\end{lemma}
\begin{proof}
We prove by induction on $n$ that if $s$ is the least stage
when all requirements $R_i$ and $N_j$ for $i \leq n$, $j < n$ cease
to change status, then $N_n$ is satisfied at all stages beyond
stage $s+1$. Indeed, suppose the
result holds for $k <n$, and that $R_n$ obtains its final
status for the first time at stage $s$. At stage $s+1$, $N_n$
is either already satisfied, or receives attention and becomes
satisfied. Since all the $R_k$ for $k \leq n$ do not change
status after stage $s$, it is easy to see that $D \supset
\delta_{s+1}$, so $N_n$ remains satisfied beyond stage $s+1$.
\end{proof}

\end{proof}

\begin{remark} The set $D$ constructed in Theorem \ref{sigthm} is such that $D \not\geq_T \es^\prime$. \end{remark}
\begin{proof} Fokina, Kalimullin, and Miller \cite{FKM} showed
that all degrees which are c.e.a.\ $(\es^\prime)$ are degrees
of categoricity. If we had $D \geq_T \es^\prime$ then $D$ would
be c.e.a.\ $(\es^\prime)$ for a contradiction.
\end{proof}

We note that since all $\Sigma^0_2$ degrees are hyperimmune, this implies there is a hyperimmune degree which is not a degree of categoricity \cite{refnote}.

\section{Conclusion}

Considerable ground remains open in finding how low in
complexity a degree can be without being a degree of
categoricity.  On one side, it is not known if there is a
$\Delta^0_2$ degree which is not a degree of categoricity.  On
the other side, it has not been shown that every 3-c.e.\ degree
is a degree of categoricity.

We can also consider questions about categorizing the degrees
of categoricity.  What other classes of degrees can be shown to
lack (or have) degrees of categoricity?  Must every degree of
categoricity $\dg{x}$ be such that $\dg{0}^{(\gamma)} \leq_T
\dg{x} \leq_T \dg{0}^{(\gamma + 1)}$ for some ordinal $\gamma$?

Finally, the connection between degrees of categoricity and
strong degrees of categoricity can be further explored.  The
question of Fokina, Kalimullin, and Miller \cite{FKM}, is there
a degree of categoricity which is not strong, remains open.  In
fact, every computable structure constructed so far that
witnesses a degree of categoricity, does so by witnessing a
strong degree of categoricity.

\nocite{*}
\bibliography{NotDegreesofCatFinal}
\end{document}